\numberwithin{equation}{section}
\numberwithin{figure}{section}
\theoremstyle{plain}
\newtheorem{theorem}{Theorem}[section]
\newtheorem{lemma}[theorem]{Lemma}
\theoremstyle{remark}
\theoremstyle{definition}
\newcommand{\R}{\mathbb{R}}
\newcommand{\N}{\mathbb{N}}
\newcommand{\EE}{\mathcal{E}}
\newcommand{\GG}{\mathcal{G}}
\newcommand{\HH}{\mathcal{H}}
\newcommand{\MM}{\mathcal{M}}
\newcommand{\PP}{\mathcal{P}}
\newcommand{\QQ}{\mathcal{Q}}
\newcommand{\RR}{\mathcal{R}}
\newcommand{\hhh}{\mathtt{h}}
\newcommand{\iii}{\mathtt{i}}
\newcommand{\jjj}{\mathtt{j}}
\newcommand{\dime}{\textrm{dim}_\psi}
\newcommand{\dimh}{\textrm{dim}_H}
\newcommand{\as}{\underline{s}}
\newcommand{\ys}{\overline{s}}
\newcommand{\pallo}{\circ}
\newcommand{\eps}{\varepsilon}
\newcommand{\khii}{\text{\lower -.4ex\hbox{$\chi$}}}
\newcommand{\fii}{\varphi}
\begin{document}

\title{Measures of full dimension on self-affine sets}

\author{Antti K\"aenm\"aki}
\address{Department of Mathematics and Statistics \\
         P.O. Box 35 (MaD) \\
         FIN-40014 University of Jyv\"askyl\"a \\
         Finland}
\email{antakae@maths.jyu.fi}

\subjclass[2000]{Primary 28A80; Secondary 37C45.}
\date{\today}

\begin{abstract}

Under the assumption of a natural subadditive potential, the so called
cylinder function, working on the symbol space we prove the existence of the
ergodic invariant probability measure satisfying the equilibrium state.
As an application we show that for typical self-affine sets there exists an
ergodic invariant measure having the same Hausdorff dimension as the set itself.
\end{abstract}

\maketitle

\section{Introduction}

In 1981, Hutchinson \cite{hu} introduced the formal definition of
iterated function systems (IFS). Some ideas in this direction have
been presented also earlier, especially in early works of Cantor and
also by Moran \cite{mo} and 
Mandelbrot \cite{man}. Since then self-similar sets, the limit sets
of the so called similitude IFS's, have aroused great interest. By
self-similarity we mean that the set contains copies of itself on
many different scales. Since self-similar sets are widely studied and
with suitable extensions to this setting we can achieve, for example,
so called Julia 
sets, it is interesting to study limit sets of more general systems.

In this note we use one commonly used approach to generalize the
self-similarity. In particular, we consider the sets obtained as geometric
projections of the symbol space.
With the geometric projection here we mean a mapping constructed
as follows. Take a finite set $I$ with at least two elements and
define $I^* = \bigcup_{n=1}^\infty I^n$ and $I^\infty = I^\N$.
Let now $X \subset \R^d$ be a compact set and choose a
collection $\{ X_\iii : \iii \in I^* \}$ of nonempty closed subsets
of $X$ satisfying
\begin{itemize}
  \item[(1)] $X_{\iii,i} \subset X_\iii$ for every $\iii \in I^*$ and
  $i \in I$,
  \item[(2)] $\text{diam}(X_\iii) \to 0$, as $|\iii| \to \infty$.
\end{itemize}
Now the \emph{projection mapping} is the function
$\pi : I^\infty \to X$, for which
\begin{equation*}
  \{ \pi(\iii) \} = \bigcap_{n=1}^\infty X_{\iii|_n}
\end{equation*}
as $\iii \in I^\infty$. The compact set $E = \pi(I^\infty)$ is called a
\emph{limit set}.

In \cite{k} it was introduced the definition of the so called cylinder function
on the symbol space. The main idea of the cylinder function
is to generalize the mass distribution, which is well explained in
Falconer \cite{fa3}. The cylinder function 
gives us a natural subadditive potential and following \cite{k} we are able to 
prove the existence of the ergodic invariant probability measure satisfying
the equilibrium state. Hence working first on the symbol space and then
projecting the situation into $\R^d$ gives us a great amount of flexibility
in studying different
kinds of limit sets. As an application we show that for typical self-affine sets
making a good choice for the cylinder function the geometric projection of the
previously mentioned measure has full Hausdorff dimension.

\section{Measures with full dimension on symbol space}

Let $I$ be a finite set with at least two elements.
Put $I^* = \bigcup_{n=1}^\infty I^n$ and $I^\infty = I^\N = \{
(i_1,i_2,\ldots) : i_j \in I \text{ for } j \in \N \}$. Thus, if
$\iii \in I^*$, there is $k \in \N$ such that $\iii =
(i_1,\ldots,i_k)$, where $i_j \in I$ for all $j=1,\ldots,k$. We
call this $k$ the \emph{length} of $\iii$ and we denote
$|\iii|=k$. If $\jjj \in I^* \cup I^\infty$, then with the
notation $\iii,\jjj$ we mean the element obtained by juxtaposing
the terms of $\iii$ and $\jjj$. If $\iii \in
I^\infty$, we denote $|\iii|=\infty$, and for
$\iii \in I^* \cup I^\infty$ we put $\iii|_k = (i_1,\ldots,i_k)$
whenever $1 \le k < |\iii|$. We define $[\iii;A] = \{ \iii,\jjj : \jjj
\in A \}$ as $\iii \in I^*$ and $A \subset I^\infty$ and we call the set
$[\iii] = [\iii,I^\infty]$ the \emph{cylinder set of level $|\iii|$}.

Define
\begin{equation*}
  |\iii - \jjj| =
  \begin{cases}
    2^{-\min\{ k-1\; :\; \iii|_k \ne \jjj|_k \}}, \quad &\iii \ne \jjj \\
    0, \quad &\iii = \jjj
  \end{cases}
\end{equation*}
whenever $\iii,\jjj \in I^\infty$. Then the couple $(I^\infty,| \cdot |)$ is
a compact metric space. Let us call $(I^\infty,| \cdot |)$ a
\emph{symbol space} and an element $\iii \in I^\infty$ a
\emph{symbol}. If there is no danger of misunderstanding, let us also
call an element $\iii \in I^*$ a symbol. Define the \emph{left shift}
$\sigma : I^\infty \to I^\infty$ by setting
\begin{equation*}
 \sigma(i_1,i_2,\ldots) = (i_2,i_3,\ldots).
\end{equation*}
Clearly, $\sigma$ is continuous and surjective. If $\iii \in I^n$ for
some $n \in \N$, then with the notation $\sigma(\iii)$ we mean the symbol
$(i_2,\ldots,i_n) \in I^{n-1}$.

If for each $t \ge 0$ and $\iii \in I^*$ there is a function $\psi_\iii^t :
I^\infty \to (0,\infty)$, we call it a \emph{cylinder function} if it satisfies 
the following three conditions:
\begin{itemize}
  \item[(1)] There exists $K_t \ge 1$ not depending on $\iii$ such that
  \begin{equation*}
    \psi_\iii^t(\hhh) \le K_t \psi_\iii^t(\jjj)
  \end{equation*}
  for any $\hhh,\jjj \in I^\infty$.
  \item[(2)] For every $\hhh \in I^\infty$ and integer $1 \le j < |\iii|$
  we have
  \begin{equation*}
    \psi_\iii^t(\hhh) \le \psi_{\iii|_j}^t\bigl( \sigma^j(\iii),\hhh \bigr)
	\psi_{\sigma^j(\iii)}^t(\hhh).
  \end{equation*}
  \item[(3)] There exist constants $0<\as,\ys<1$ such that
  \begin{equation*}
    \psi_\iii^t(\hhh)\as^{\delta|\iii|} \le \psi_\iii^{t+\delta}(\hhh)
    \le \psi_\iii^t(\hhh)\ys^{\delta|\iii|}
  \end{equation*}
  for every $\hhh \in I^\infty$.
\end{itemize}
When we speak about one cylinder function, we always assume
there is a collection of them defined as $\iii \in I^*$ and $t \ge 0$.
Let us comment on these conditions. The first one is called the
\emph{bounded variation principle (BVP)}, and it says that the value of
$\psi_\iii^t(\hhh)$ cannot vary too much; roughly speaking,
$\psi_\iii^t$ is essentially constant. The second condition is called
the \emph{subchain rule}. If the subchain rule is satisfied
with equality, we call it a \emph{chain rule}. The third condition is
there just to guarantee the nice behaviour of the cylinder function with
respect to the parameter $t$.

For fixed $\hhh \in I^\infty$, we call the following limit
\begin{equation*}
  P(t) = \lim_{n \to \infty} \tfrac{1}{n} \log \sum_{\iii \in I^n}
  \psi_\iii^t(\hhh)
\end{equation*}
a \emph{topological pressure}. From the definition of the cylinder
function it follows that the topological pressure is continuous,
strictly decreasing and independent of $\hhh$.
We denote the collection of all Borel regular probability measures on
$I^\infty$ with $\MM(I^\infty)$. Define
\begin{equation*}
  \MM_\sigma(I^\infty) = \{ \mu \in \MM(I^\infty) :
                            \mu \text{ is invariant} \},
\end{equation*}
where the invariance of $\mu$ means that $\mu([\iii]) = \mu\bigl(
\sigma^{-1}([\iii]) \bigr)$ for every $\iii \in I^*$.
Now $\MM_\sigma(I^\infty)$ is a 
nonempty closed subset of the compact set $\MM(I^\infty)$ in the weak
topology. For a given $\mu \in \MM_\sigma(I^\infty)$ we define an
\emph{energy} $E_\mu(t)$, by setting
\begin{equation*}
  E_\mu(t) = \lim_{n \to \infty} \tfrac{1}{n} \sum_{\iii \in I^n}
             \mu([\iii]) \log\psi_\iii^t(\hhh)
\end{equation*}
and an \emph{entropy} $h_\mu$ by setting
\begin{equation*}
  h_\mu = -\lim_{n \to \infty} \tfrac{1}{n} \sum_{\iii \in I^n}
          \mu([\iii]) \log\mu([\iii]).
\end{equation*}
It follows that the definition of energy is also independent of $\hhh$.
If we denote
$\alpha(\iii) = \psi_\iii^t(\hhh)/\sum_{\jjj \in I^{|\iii|}}
\psi_\jjj^t(\hhh)$, as $\iii \in I^*$, we get, using Jensen's
inequality for any $n \in \N$ and $\mu \in \MM(I^\infty)$,
\begin{equation} \label{eq:jensen}
\begin{split}
  0 &= 1\log 1 = \tfrac{1}{n} H\Biggl( \sum_{\iii \in I^n} \alpha(\iii)
  \frac{\mu([\iii])}{\alpha(\iii)} \Biggr) \ge \tfrac{1}{n} \sum_{\iii \in I^n}
  \alpha(\iii) H\biggl( \frac{\mu([\iii])}{\alpha(\iii)} \biggr) \\
  &= \tfrac{1}{n} \sum_{\iii \in I^n} \mu([\iii]) \biggl( -\log\mu([\iii]) +
  \log\psi_\iii^t(\hhh) - \log\sum_{\jjj \in I^n} \psi_\jjj^t(\hhh) \biggr)
\end{split}
\end{equation}
with equality if $\mu([\iii]) = \alpha(\iii)$. Here $H(x) = -x\log x$,
as $x>0$, and $H(0)=0$. Thus by letting $n \to \infty$ we conclude
\begin{equation} \label{eq:kaavake}
  P(t) \ge h_\mu + E_\mu(t)
\end{equation}
whenever $\mu \in \MM_\sigma(I^\infty)$. An invariant measure
which satisfies \eqref{eq:kaavake} with equality is called a
\emph{$t$-equilibrium measure}. Furthermore, a measure $\mu \in
\MM(I^\infty)$ is \emph{ergodic} if $\mu(A)=0$ or $\mu(A)=1$ for every
Borel set $A \subset I^\infty$ for which $A = \sigma^{-1}(A)$.

Next, we introduce an important property of functions of the following type.
We say that a function $a : \N \times \N \cup \{ 0 \} \to \R$ satisfies
the \emph{generalized subadditive condition} if
\begin{equation*}
  a(n_1+n_2,0) \le a(n_1,n_2) + a(n_2,0)
\end{equation*}
and $|a(n_1,n_2)| \le n_1C$ for some constant $C$. Furthermore, we say
that this function is \emph{subadditive} if in addition $a(n_1,n_2) =
a(n_1,0)$ for all $n_1 \in \N$ and $n_2 \in \N \cup \{ 0 \}$.

\begin{lemma}[\mbox{\cite[Lemma 2.2]{k}}] \label{thm:gensubadd}
  Suppose that a function $a : \N \times \N \cup \{ 0 \} \to \R$
  satisfies the generalized subadditive condition. Then
  \begin{equation*}
    \tfrac{1}{n} a(n,0) \le \tfrac{1}{kn} \sum_{j=0}^{n-1} a(k,j) +
    \eps(n)
  \end{equation*}
  whenever $0<k<n$. Moreover, if this function is
  subadditive, then the limit $\lim_{n \to \infty} \tfrac{1}{n}
  a(n,0)$ exists and equals to $\inf_n \tfrac{1}{n} a(n,0)$.
\end{lemma}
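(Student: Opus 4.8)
The plan is to establish the displayed inequality by a ``phase‑shifted'' iteration of the generalized subadditive condition, and then to obtain the \emph{moreover} part as an easy consequence.

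First, fix $0<k<n$. For each phase $s\in\{0,1,\dots,k-1\}$ write $n-s=q_sk+r_s$ with $0\le r_s<k$, and peel off from $a(n,0)$ an initial block of length $s$ followed by $q_s$ successive blocks of length $k$; applying $a(n_1+n_2,0)\le a(n_1,n_2)+a(n_2,0)$ repeatedly gives
\begin{equation*}
  a(n,0)\le a(s,n-s)+\sum_{\ell=1}^{q_s}a\bigl(k,(n-s)-\ell k\bigr)+a(r_s,0),
\end{equation*}
where a boundary term is understood to be $0$ (and dropped) when its first argument is $0$. I then sum this over $s=0,\dots,k-1$, so that the left-hand side becomes $k\,a(n,0)$.

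The next step is the combinatorial bookkeeping, which I expect to be the only real obstacle. The second arguments $j=(n-s)-\ell k$ occurring in the middle sums satisfy $j\equiv r_s\pmod k$, and since $s\mapsto r_s$ is a bijection of $\{0,\dots,k-1\}$ (the numbers $n-s$ run through $k$ consecutive integers), the index sets for distinct $s$ lie in distinct residue classes mod $k$, hence are pairwise disjoint subsets of $\{0,1,\dots,n-1\}$; a short count gives $\sum_s q_s=n-(k-1)$, so exactly $k-1$ values of $j$ are missing. Using $|a(n_1,n_2)|\le n_1C$, the missing terms contribute at most $(k-1)kC$, while $\sum_s a(s,n-s)\le\sum_s sC=k(k-1)C/2$ and $\sum_s a(r_s,0)\le k(k-1)C/2$. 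Hence
\begin{equation*}
  k\,a(n,0)\le\sum_{j=0}^{n-1}a(k,j)+2k(k-1)C,
\end{equation*}
and division by $kn$ yields the claim with $\eps(n)=2(k-1)C/n$, which tends to $0$ as $n\to\infty$ (with $k$ fixed).

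For the \emph{moreover} part, subadditivity gives $a(k,j)=a(k,0)$, so the inequality just proved reads $\tfrac1n a(n,0)\le\tfrac1k a(k,0)+\eps(n)$. Letting $n\to\infty$ with $k$ fixed gives $\limsup_n\tfrac1n a(n,0)\le\tfrac1k a(k,0)$ for every $k$, whence $\limsup_n\tfrac1n a(n,0)\le\inf_k\tfrac1k a(k,0)\le\liminf_n\tfrac1n a(n,0)$; since each $\tfrac1k a(k,0)\ge-C$, the infimum is finite, so the limit exists and equals $\inf_n\tfrac1n a(n,0)$.
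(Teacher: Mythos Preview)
Your argument is correct. The phase-shifted decomposition, the bijection $s\mapsto r_s$ on residue classes, the count $\sum_s q_s=n-(k-1)$, and the resulting error bound $\eps(n)=2(k-1)C/n$ all check out; the \emph{moreover} part then follows exactly as you wrote. The paper itself does not prove this lemma but merely cites \cite[Lemma~2.2]{k}, so there is no in-paper proof to compare against; your approach is, however, the standard one (it is the same averaging-over-phases trick used in Walters' proof of the variational principle, adapted to the two-variable generalized subadditive condition).

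One small remark: your $\eps(n)$ depends on $k$, whereas the paper's phrasing ``$\eps(n)\downarrow 0$'' might suggest otherwise. This is harmless, since every application in the paper fixes $k$ and sends $n\to\infty$ first; but you may want to write $\eps_k(n)$ to make the dependence explicit.
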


Here $\eps(n) \downarrow 0$ as $n \to \infty$.
The importance of this lemma in our case is the fact that for any
given $\mu \in \MM(I^\infty)$ the following functions
\begin{align*}
  (n_1,n_2) \mapsto -&\sum_{\iii \in I^{n_1}} \mu \pallo
            \sigma^{-n_2}([\iii]) \log \bigl( \mu \pallo
            \sigma^{-n_2}([\iii]) \bigr) \qquad\text{and} \\
  (n_1,n_2) \mapsto &\sum_{\iii \in I^{n_1}} \mu \pallo
            \sigma^{-n_2}([\iii]) \log\psi_\iii^t(\hhh) + \log K_t
\end{align*}
defined on $\N \times \N \cup \{ 0 \}$ satisfy the generalized
subadditive condition. For the proof, see \cite[Lemma 2.3]{k}.
Notice that if $\mu \in \MM_\sigma(I^\infty)$,
the functions are subadditive. Now, for example, the existence of
the limits in the definition of energy and entropy can be verified easily using
Lemma \ref{thm:gensubadd}. But the real power of this lemma can be seen in the
following theorem.

\begin{theorem}[\mbox{\cite[Theorem 2.6]{k}}] \label{thm:equilibrium}
  There exists an equilibrium measure.
\end{theorem}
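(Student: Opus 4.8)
The plan is to fix the parameter $t \ge 0$ and produce, via a compactness argument, an invariant measure $\mu$ attaining equality in \eqref{eq:kaavake}. First I would exhibit a sequence of measures whose "finite-level" pressure-deficit tends to zero. Concretely, for each $n$ let $\nu_n \in \MM(I^\infty)$ be the measure giving each cylinder $[\iii]$, $\iii \in I^n$, mass $\alpha(\iii) = \psi_\iii^t(\hhh)/\sum_{\jjj \in I^n}\psi_\jjj^t(\hhh)$ (distributed arbitrarily, say as a Bernoulli-type product, below level $n$); by the equality case of the Jensen computation \eqref{eq:jensen}, this $\nu_n$ makes the level-$n$ entropy-plus-energy expression equal to $\log\sum_{\jjj\in I^n}\psi_\jjj^t(\hhh)$, i.e. to $P(t)$ up to an $o(n)$ term. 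Then I would symmetrize: set $\mu_n = \tfrac1n\sum_{j=0}^{n-1} \nu_n\circ\sigma^{-j}$, the standard Krylov--Bogolyubov averaging, so that any weak limit $\mu$ of a subsequence $(\mu_{n_k})$ lies in $\MM_\sigma(I^\infty)$, which is nonempty, compact, and closed in the weak topology as already noted.

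The heart of the argument is to pass the lower bound $h_\mu + E_\mu(t) \ge P(t)$ (the reverse of \eqref{eq:kaavake}) through this limit, and this is where Lemma~\ref{thm:gensubadd} does the work. The point is that the two functions $(n_1,n_2)\mapsto -\sum_{\iii\in I^{n_1}}\mu\circ\sigma^{-n_2}([\iii])\log\bigl(\mu\circ\sigma^{-n_2}([\iii])\bigr)$ and $(n_1,n_2)\mapsto \sum_{\iii\in I^{n_1}}\mu\circ\sigma^{-n_2}([\iii])\log\psi_\iii^t(\hhh)+\log K_t$ satisfy the generalized subadditive condition for \emph{any} $\mu\in\MM(I^\infty)$, so Lemma~\ref{thm:gensubadd} applied to their sum gives, for every $\mu_n$ and every fixed level $k<n$,
\begin{equation*}
  \tfrac1n\sum_{\iii\in I^n}\mu_n([\iii])\Bigl(-\log\mu_n([\iii])+\log\psi_\iii^t(\hhh)\Bigr)
  \le \tfrac{1}{kn}\sum_{j=0}^{n-1}\Bigl(-\!\!\sum_{\iii\in I^k}\mu_n\circ\sigma^{-j}([\iii])\log\mu_n\circ\sigma^{-j}([\iii])+\sum_{\iii\in I^k}\mu_n\circ\sigma^{-j}([\iii])\log\psi_\iii^t(\hhh)\Bigr)+C\eps(n).
\end{equation*}
The left-hand side, by construction of $\nu_n$ and the BVP (condition (1) of the cylinder function, which controls the distortion introduced by the averaging over $j$ and the arbitrary choice below level $n$), is at least $P(t)-o(1)$. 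On the right-hand side, $\tfrac1n\sum_{j=0}^{n-1}\mu_n\circ\sigma^{-j}=\mu_n$ up to boundary terms of order $k/n$, so after letting $n\to\infty$ along the chosen subsequence and using weak convergence $\mu_{n_k}\to\mu$ together with continuity of $\iii\mapsto\psi_\iii^t(\hhh)$ on each finite level (again BVP makes these genuinely continuous cylinder-level functions), one obtains
\begin{equation*}
  P(t) \le \tfrac1k\Bigl(-\!\!\sum_{\iii\in I^k}\mu([\iii])\log\mu([\iii])+\sum_{\iii\in I^k}\mu([\iii])\log\psi_\iii^t(\hhh)\Bigr).
\end{equation*}
Finally, letting $k\to\infty$ and invoking the existence of the defining limits for $h_\mu$ and $E_\mu(t)$ (which themselves follow from the subadditive half of Lemma~\ref{thm:gensubadd}, valid since $\mu$ is now invariant) yields $P(t)\le h_\mu+E_\mu(t)$. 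Combined with \eqref{eq:kaavake} this forces equality, so $\mu$ is a $t$-equilibrium measure.

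The main obstacle, and the step I would be most careful about, is controlling the error terms when replacing the "ideal" weights $\alpha(\iii)$ and their shifts by the averaged measures $\mu_n$ and then by the weak limit $\mu$: one must check that the BVP constant $K_t$ absorbs the discrepancy between $\mu_n\circ\sigma^{-j}([\iii])$ and $\alpha(\iii)$ uniformly in $j$, and that the $\tfrac1n\sum_j$ boundary corrections and the $\eps(n)$ from Lemma~\ref{thm:gensubadd} vanish in the right order before $k\to\infty$ is taken. Everything else — nonemptiness and weak-compactness of $\MM_\sigma(I^\infty)$, continuity/monotonicity of $P(t)$, and existence of the energy and entropy limits — is already available from the material preceding the theorem.
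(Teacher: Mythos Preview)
Your overall architecture matches the paper's: build $\nu_n$ attaining equality in \eqref{eq:jensen} at level $n$, form the averages $\mu_n=\tfrac1n\sum_{j=0}^{n-1}\nu_n\circ\sigma^{-j}$, extract a weak limit $\mu\in\MM_\sigma(I^\infty)$, and use Lemma~\ref{thm:gensubadd} to push the lower bound through. The gap is in \emph{which} measure you feed into Lemma~\ref{thm:gensubadd}. You apply the lemma with the choice $\mu=\mu_n$ and then assert that the left-hand side---the level-$n$ entropy-plus-energy of $\mu_n$---is at least $P(t)-o(1)$ ``by construction of $\nu_n$ and the BVP''. This does not follow. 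The equality case of \eqref{eq:jensen} pins down the level-$n$ expression for $\nu_n$, not for $\mu_n$; at level $n$ the averaged measure has cylinder masses $\mu_n([\iii])=\tfrac1n\sum_j\nu_n\circ\sigma^{-j}([\iii])$, which bear no controlled relation to $\alpha(\iii)$, and the BVP (which bounds the oscillation of $\psi_\iii^t$, not of measures) does not repair this. The obstacle you flag in your last paragraph is thus real, and the mechanism you propose for absorbing it is the wrong one.

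The paper's remedy is to apply Lemma~\ref{thm:gensubadd} with $\nu_n$ rather than $\mu_n$. Then the left-hand side is exactly $\tfrac1n\log\sum_{\iii\in I^n}\psi_\iii^t(\hhh)$ by the equality in \eqref{eq:jensen}, with nothing to estimate. On the right-hand side one now has terms $\sum_{\iii\in I^k}H\bigl(\nu_n\circ\sigma^{-j}([\iii])\bigr)$ averaged over $j$, and the passage to $\mu_n$ is accomplished not by near-invariance but by \emph{concavity} of $H(x)=-x\log x$: one has $\tfrac1n\sum_{j}H\bigl(\nu_n\circ\sigma^{-j}([\iii])\bigr)\le H\bigl(\tfrac1n\sum_{j}\nu_n\circ\sigma^{-j}([\iii])\bigr)=H\bigl(\mu_n([\iii])\bigr)$. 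The energy term is linear in the measure, so it passes through the average directly. With this correction $K_t$ enters only as an innocuous additive $\tfrac1k\log K_t$, and the near-invariance of $\mu_n$ is used solely to check $\mu\in\MM_\sigma(I^\infty)$, not in the pressure inequality itself.
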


\begin{proof}
  For fixed $\hhh \in I^\infty$ we define for each $n \in \N$ a
  probability measure
  \begin{equation*}
    \nu_n = \frac{\sum_{\iii \in I^n} \psi_\iii^t(\hhh)
    \delta_{\iii,\hhh}}{\sum_{\iii \in I^n} \psi_\iii^t(\hhh)},
  \end{equation*}
  where $\delta_\hhh$ is a probability measure with support $\{ \hhh
  \}$. Now with this measure we have equality in \eqref{eq:jensen}.
  Define then for each $n \in \N$ a probability measure
  \begin{equation*}
    \mu_n = \tfrac{1}{n} \sum_{j=0}^{n-1} \nu_n \pallo \sigma^{-j}
  \end{equation*}
  and take $\mu$ to be an accumulation point of the
  set $\{ \mu_n \}_{n \in \N}$ in the weak topology. Now for any $\iii \in
  I^*$ we have
  \begin{align*}
    \bigl|\mu_n([\iii]) - \mu_n\bigl( \sigma^{-1}([\iii]) \bigr)\bigr| &=
    \tfrac{1}{n} \bigl|\nu_n([\iii]) -
    \nu_n\pallo\sigma^{-n}([\iii])\bigr| \\
    &\le \tfrac{1}{n} \to 0,
  \end{align*}
  as $n \to \infty$. Thus $\mu \in \MM_\sigma(I^\infty)$. According to
  Lemma \ref{thm:gensubadd}, we have, using concavity of
  $x \mapsto -x\log x$ as $x>0$,
  \begin{align} \label{eq:seiskatahti}
    -\tfrac{1}{n} \sum_{\iii \in I^n} \nu_n([\iii]) \log \nu_n([\iii]) &\le
    -\tfrac{1}{kn} \sum_{j=0}^{n-1} \sum_{\iii \in I^k} \nu_n
    \pallo \sigma^{-j}([\iii]) \log\bigl( \nu_n
    \pallo \sigma^{-j}([\iii]) \bigr) + \eps(n) \notag \\
    &\le -\tfrac{1}{k} \sum_{\iii \in I^k} \mu_n([\iii]) \log \mu_n([\iii])
    + \eps(n)
  \end{align}
  whenever $0<k<n$. Using Lemma \ref{thm:gensubadd} again, we get also
  \begin{equation} \label{eq:kasitahti}
  \begin{split}
    \tfrac{1}{n} \sum_{\iii \in I^n} \nu_n([\iii]) &\log
    \psi_\iii^t(\hhh) + \tfrac{1}{n} \log K_t \\
    &\le \tfrac{1}{kn} \sum_{j=0}^{n-1} \Biggl( \sum_{\iii \in I^k}
    \nu_n \pallo \sigma^{-j}([\iii]) \log \psi_\iii^t(\hhh) + \log K_t
    \Biggr) + \eps(n) \\
    &= \tfrac{1}{k} \sum_{\iii \in I^k} \mu_n([\iii]) \log
    \psi_\iii^t(\hhh) + \tfrac{1}{k}\log K_t + \eps(n)
  \end{split}
  \end{equation}
  whenever $0<k<n$. Now putting
  \eqref{eq:jensen}, \eqref{eq:seiskatahti} and
  \eqref{eq:kasitahti} together, we have
  \begin{align*}
    \tfrac{1}{n} \log \sum_{\iii \in I^n} \psi_\iii^t(\hhh) &=
    \tfrac{1}{n} \sum_{\iii \in I^n}
    H\bigl( \nu_n([\iii]) \bigr) + \tfrac{1}{n} \sum_{\iii \in I^n}
    \nu_n([\iii]) \log \psi_\iii^t(\hhh) \\
    &\le \tfrac{1}{k} \sum_{\iii \in I^k} H\bigl( \mu_n([\iii]) \bigr)
    + \tfrac{1}{k} \sum_{\iii \in I^k} \mu_n([\iii]) \log
    \psi_\iii^t(\hhh) + \tfrac{1}{k} \log K_t + \eps(n)
  \end{align*}
  whenever $0<k<n$. Letting now $n \to \infty$, we get
  \begin{equation*}
    P(t) \le \tfrac{1}{k} \sum_{\iii \in I^k} H\bigl( \mu([\iii])
    \bigr) + \tfrac{1}{k} \sum_{\iii \in I^k} \mu([\iii]) \log
    \psi_\iii^t(\hhh) + \tfrac{1}{k} \log K_t
  \end{equation*}
  since cylinder sets have empty boundary. The proof is finished by
  letting $k \to \infty$.
\end{proof}

At this point we should emphasize the fact that in general there does not exist a
conformal measure like there does in the case of additive potential, that is,
when the cylinder function satisfies the chain rule.
It is a natural question to ask if the equilibrium measure is ergodic also
in our more general setting, since the lack of conformal measure prevents us to
rely on the proofs used in the case of additive potential. We will answer this
question positively and sketch the proof in the following.

\begin{theorem}[\mbox{\cite[Theorem 4.1]{k}}]
  There exists an ergodic equilibrium measure.
\end{theorem}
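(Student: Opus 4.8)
The plan is to first record the variational principle $P(t) = \max\{ h_\mu + E_\mu(t) : \mu \in \MM_\sigma(I^\infty) \}$, which is simply the combination of \eqref{eq:kaavake} with Theorem \ref{thm:equilibrium}, and then to locate an ergodic measure among the maximizers by an ergodic decomposition argument. Write $\EE_t = \{ \mu \in \MM_\sigma(I^\infty) : h_\mu + E_\mu(t) = P(t) \}$ for the set of $t$-equilibrium measures, which is nonempty by Theorem \ref{thm:equilibrium}.

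The key analytic point is that the functional $\mu \mapsto h_\mu + E_\mu(t)$ is affine and upper semicontinuous on the compact metrizable convex set $\MM_\sigma(I^\infty)$. For the entropy term, affinity is the classical fact that $\mu \mapsto h_\mu$ is affine on $\MM_\sigma(I^\infty)$; upper semicontinuity follows because, applying the subadditive part of Lemma \ref{thm:gensubadd} to $(n_1,n_2) \mapsto -\sum_{\iii \in I^{n_1}} \mu \pallo \sigma^{-n_2}([\iii]) \log\bigl( \mu \pallo \sigma^{-n_2}([\iii]) \bigr)$, one gets $h_\mu = \inf_n \bigl( -\tfrac{1}{n} \sum_{\iii \in I^n} \mu([\iii]) \log\mu([\iii]) \bigr)$, an infimum of weakly continuous functions of $\mu$ (cylinder sets have empty boundary). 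For the energy term, each map $\mu \mapsto \tfrac{1}{n} \sum_{\iii \in I^n} \mu([\iii]) \log\psi_\iii^t(\hhh)$ is the restriction of a linear weakly continuous functional, so the pointwise limit $E_\mu(t)$ is affine; and applying Lemma \ref{thm:gensubadd} to $(n_1,n_2) \mapsto \sum_{\iii \in I^{n_1}} \mu \pallo \sigma^{-n_2}([\iii]) \log\psi_\iii^t(\hhh) + \log K_t$ gives $E_\mu(t) = \inf_n \bigl( \tfrac{1}{n} \sum_{\iii \in I^n} \mu([\iii]) \log\psi_\iii^t(\hhh) + \tfrac{1}{n}\log K_t \bigr)$, again an infimum of continuous functions, hence upper semicontinuous. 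Consequently $\EE_t$ is the set on which an affine upper semicontinuous functional attains its maximum value $P(t)$, so it is a nonempty, compact, convex subset of $\MM_\sigma(I^\infty)$.

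Now take any $\mu \in \EE_t$ and let $\mu = \int \mu_\omega \, d\tau(\omega)$ be its ergodic decomposition, so that $\tau$-almost every $\mu_\omega$ is ergodic. Since $\mu \mapsto h_\mu$ and $\mu \mapsto E_\mu(t)$ are affine and upper semicontinuous on a compact metrizable convex set, they commute with the barycenter integral, i.e. $h_\mu = \int h_{\mu_\omega}\,d\tau(\omega)$ (this is Jacobs' theorem for the entropy term) and $E_\mu(t) = \int E_{\mu_\omega}(t)\,d\tau(\omega)$. Hence, using \eqref{eq:kaavake},
\begin{equation*}
  P(t) = h_\mu + E_\mu(t) = \int \bigl( h_{\mu_\omega} + E_{\mu_\omega}(t) \bigr)\,d\tau(\omega) \le \int P(t)\,d\tau(\omega) = P(t),
\end{equation*}
so $h_{\mu_\omega} + E_{\mu_\omega}(t) = P(t)$ for $\tau$-almost every $\omega$, and any such $\mu_\omega$ is an ergodic $t$-equilibrium measure. (Alternatively, one may invoke the Krein--Milman theorem to pick an extreme point $\mu$ of $\EE_t$; affinity of $\mu \mapsto h_\mu + E_\mu(t)$ then forces every convex decomposition of $\mu$ inside $\MM_\sigma(I^\infty)$ to consist of equilibrium measures, so $\mu$ is extreme in $\MM_\sigma(I^\infty)$, hence ergodic.)

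The main obstacle is the second paragraph: showing that the energy functional built from the merely subadditive potential $\log\psi_\iii^t$ is genuinely affine and upper semicontinuous, and that it passes through the ergodic decomposition integral. This is exactly where the bounded variation principle together with Lemma \ref{thm:gensubadd} (the identity $\lim = \inf$) do the real work, compensating for the additivity that would be available for free in the classical chain-rule case. Once this is established, the ergodic decomposition step (or, equivalently, the Krein--Milman step) is routine.
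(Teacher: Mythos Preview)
Your argument is correct and follows essentially the same route as the paper's sketch: affinity and (upper semi)continuity of the entropy and energy functionals, followed by a Choquet/ergodic decomposition of an equilibrium measure. The only cosmetic differences are that the paper phrases the final step as a proof by contradiction, and it justifies the barycenter identity for $E_\mu(t)$ by applying the decomposition to the continuous affine functionals $\QQ_n(\mu)=\tfrac{1}{n}\sum_{\iii\in I^n}\mu([\iii])\log\psi_\iii^t(\hhh)$ and passing to the limit via dominated convergence, rather than first observing (as you do, via $\lim=\inf$) that $E_\mu(t)$ is itself upper semicontinuous.
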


\begin{proof}[Sketch of the proof]
  We will consider mappings $\PP, \QQ_n, \QQ : \MM_\sigma(I^\infty) \to \R$,
  for which $\PP(\mu) = h_\mu$, $\QQ_n(\mu) = \tfrac{1}{n} \sum_{\iii \in I^n}
  \mu([\iii]) \log\psi_\iii^t(\hhh)$, and $\QQ(\mu) = E_\mu(t) =
  \lim_{n\to\infty} \QQ_n(\mu)$. It is clear that each $\QQ_n$ is
  affine and continuous 
  (basically because cylinder sets have empty boundary) and $\QQ$ is affine.
  It follows also that $\PP$ is affine and upper semicontinuous.

  Recalling that there exists an equilibrium measure, we assume now contrarily
  that $\PP + \QQ$ cannot attain its supremum with an ergodic measure. Hence
  \begin{equation} \label{eq:antiteesi}
    (\PP + \QQ)(\eta) < (\PP + \QQ)(\mu)
  \end{equation}
  for any ergodic $\eta \in \MM_\sigma(I^\infty)$, where $\mu$ is an equilibrium
  measure. Using Choquet's theorem, we find an ergodic decomposition for any
  invariant measure, that is, for each $\mu \in \MM(I^\infty)$ there exists a
  Borel regular probability measure $\tau_\mu$ on $\EE_\sigma(I^\infty)$,
  the set of all ergodic measures of $\MM_\sigma(I^\infty)$ such that
  \begin{equation} \label{eq:decomposition}
    \RR(\mu) = \int_{\EE_\sigma(I^\infty)} \RR(\eta) d\tau_\mu(\eta)
  \end{equation}
  for every continuous affine $\RR : \MM_\sigma(I^\infty) \to \R$.
  It follows that \eqref{eq:decomposition} remains true also for upper
  semicontinuous affine functions and hence we can write it by using
  $\PP + \QQ_n$. Therefore, applying dominated convergence theorem, we can
  actually write \eqref{eq:decomposition} by using the function $\PP + \QQ$.

  We obtain now the contradiction easily just by integrating
  \eqref{eq:antiteesi} with respect to the measure $\tau_\mu$.
  Denote $A_k = \{ \eta \in \EE_\sigma(I^\infty) : (\PP+\QQ)(\mu)
  - (\PP+\QQ)(\eta) \ge \tfrac{1}{k} \}$, where $\mu$ is an
  equilibrium measure. Now we have $\bigcup_{k=1}^\infty A_k =
  \EE_\sigma(I^\infty)$ and thus $\tau_\mu(A_k) > 0$ for some $k$. Clearly,
  \begin{align*}
    (\PP+\QQ)(\mu) - \int_{\EE_\sigma(I^\infty)} &(\PP+\QQ)(\eta)
    d\tau_\mu(\eta) \\
    &= \int_{\EE_\sigma(I^\infty)} (\PP+\QQ)(\mu) - (\PP+\QQ)(\eta)
    d\tau_\mu(\eta) \\
    &\ge \int_{A_k} \tfrac{1}{k} d\tau_\mu(\eta) = \tfrac{1}{k}
    \tau_\mu(A_k)
  \end{align*}
  for every $k$ and thus
  \begin{equation} \label{eq:erg0}
    (\PP+\QQ)(\mu) > \int_{\EE_\sigma(I^\infty)} (\PP+\QQ)(\eta)
    d\tau_\mu(\eta).
  \end{equation}
\end{proof}

Our aim is to study measures with full dimension. As we work on the symbol
space, we have to first define an appropriate dimension.
For fixed $\hhh \in I^\infty$ we define for each $n \in \N$ 
\begin{equation} \label{eq:falconer_measure1}
  \GG_n^t(A) = \inf\Biggl\{ \sum_{j=1}^\infty
  \psi_{\iii_j}^t(\hhh) : A \subset \bigcup_{j=1}^\infty
  \,[\iii_j],\; |\iii_j| \ge n \Biggr\}
\end{equation}
whenever $A \subset I^\infty$. Assumptions in Carath\'eodory's
construction are now satisfied, and we have a Borel regular
measure $\GG^t$ on $I^\infty$ with
\begin{equation} \label{eq:falconer_measure2}
  \GG^t(A) = \lim_{n \to \infty} \GG_n^t(A).
\end{equation}
Using this measure, we define
\begin{align*}
  \dime(A) &= \inf\{ t \ge 0 : \GG^t(A) = 0 \} \\
           &= \sup\{ t \ge 0 : \GG^t(A) = \infty \},
\end{align*}
and we call this ``critical value'' the \emph{equilibrium dimension}
of the set $A \subset I^\infty$. Observe that due to the BVP the
equilibrium dimension does not depend on the choice of $\hhh$.
We also define the \emph{equilibrium
dimension of a measure $\mu \in \MM(I^\infty)$} by setting $\dime(\mu)
= \inf\{ \dime(A) : A \text{ is a Borel set such that } \mu(A)=1 \}$.

The ergodicity is crucial in studying the equilibrium dimension of the
equilibrium measure $\mu$. According to theorem of Shannon-McMillan and
Kingman's subadditive ergodic theorem we have
\begin{align*}
  h_\mu &= -\lim_{n\to\infty} \tfrac{1}{n} \log([\iii|_n])
    \qquad \text{and} \\
  E_\mu(t) &= \lim_{n\to\infty} \tfrac{1}{n} \log\psi_{\iii|_n}^t(\hhh)
\end{align*}
for $\mu$-almost all $\iii \in I^\infty$. Now from the fact
$P(t)=E_\mu(t)+h_\mu$ we derive
\begin{equation} \label{eq:localdim}
  \lim_{n\to\infty} \frac{\log\mu([\iii|_n])}{\log\psi_{\iii|_n}^t(\hhh)} = 1
\end{equation}
for $\mu$-almost all $\iii \in I^\infty$ provided that $P(t)=0$.
The limit in \eqref{eq:localdim} can be considered as some kind of local
dimension of the equilibrium measure. Now the following theorem is easier
to believe.

\begin{theorem}[\mbox{\cite[Theorem 4.3]{k}}] \label{thm:fulleqdim}
  Suppose $P(t)=0$ and $\mu$ is an ergodic equilibrium
  measure. Then $\dime(\mu) = t$.
\end{theorem}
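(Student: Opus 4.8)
The plan is to establish the two bounds $\dime(\mu)\ge t$ and $\dime(\mu)\le t$ separately, each by a Frostman-type mass distribution argument comparing $\mu([\iii|_n])$ with $\psi_{\iii|_n}^t(\hhh)$ along a $\mu$-generic symbol $\iii$ (the reference point $\hhh$ is fixed once and for all, which is harmless by the BVP). The only input about $\mu$ will be the pointwise information collected just before the statement: for $\mu$-almost every $\iii$ one has $\tfrac1n\log\mu([\iii|_n])\to-h_\mu$ by the Shannon--McMillan theorem and $\tfrac1n\log\psi_{\iii|_n}^t(\hhh)\to E_\mu(t)$ by Kingman's subadditive ergodic theorem, the latter being applicable because the subchain rule~(2) together with the BVP~(1) makes $n\mapsto\log\psi_{\iii|_n}^t(\hhh)+\log K_t$ a subadditive cocycle over $\sigma$, and ergodicity pins the almost sure limit to the mean $E_\mu(t)$. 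Since $\mu$ is an equilibrium measure and $P(t)=0$, we have $h_\mu+E_\mu(t)=P(t)=0$, so both limits equal $-h_\mu$; hence $\log\mu([\iii|_n])-\log\psi_{\iii|_n}^t(\hhh)=o(n)$ for $\mu$-a.e.\ $\iii$.

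For the lower bound, fix $s<t$ and set $\delta=t-s$. Since $\psi_{\iii|_n}^{s+\delta}(\hhh)\le\psi_{\iii|_n}^s(\hhh)\ys^{\delta n}$ by condition~(3) and $s+\delta=t$, we get $\psi_{\iii|_n}^s(\hhh)\ge\ys^{-\delta n}\psi_{\iii|_n}^t(\hhh)$, a geometrically growing correction because $\ys<1$; combined with $\mu([\iii|_n])=e^{o(n)}\psi_{\iii|_n}^t(\hhh)$ this yields $\mu([\iii|_n])\le\psi_{\iii|_n}^s(\hhh)$ for all $n$ past an $\iii$-dependent threshold. Let $B_N=\{\iii:\mu([\iii|_n])\le\psi_{\iii|_n}^s(\hhh)\text{ for all }n\ge N\}$; these are Borel (unions of cylinders) and $\mu(B_N)\uparrow1$. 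Let $A$ be any Borel set with $\mu(A)=1$ and fix $N$ with $\mu(B_N)>0$, so $\mu(A\cap B_N)=\mu(B_N)>0$. For an arbitrary cover $A\cap B_N\subset\bigcup_j[\iii_j]$ with $|\iii_j|\ge n\ge N$, discard the cylinders disjoint from $A\cap B_N$; for each remaining $j$ choose $\hhh_j\in[\iii_j]\cap B_N$, so $\iii_j=\hhh_j|_{m_j}$ with $m_j=|\iii_j|\ge N$ and hence $\mu([\iii_j])\le\psi_{\hhh_j|_{m_j}}^s(\hhh)=\psi_{\iii_j}^s(\hhh)$. Summing, $\mu(A\cap B_N)\le\sum_j\mu([\iii_j])\le\sum_j\psi_{\iii_j}^s(\hhh)$, and the infimum over such covers gives $\GG_n^s(A\cap B_N)\ge\mu(A\cap B_N)=\mu(B_N)$ for every $n\ge N$, whence $\GG^s(A)\ge\GG^s(A\cap B_N)\ge\mu(B_N)>0$. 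As $s<t$ was arbitrary, $\dime(A)\ge t$ for every full-measure $A$, so $\dime(\mu)\ge t$.

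For the upper bound, fix $s>t$, set $\delta=s-t$, and pick $\eta>0$ so small that $\theta:=e^{\eta}\ys^{\delta}<1$. Since $\psi_{\iii|_n}^{t+\delta}(\hhh)\le\psi_{\iii|_n}^t(\hhh)\ys^{\delta n}$ by condition~(3) and $t+\delta=s$, and $\psi_{\iii|_n}^t(\hhh)\le e^{\eta n}\mu([\iii|_n])$ for all $n$ past an $\iii$-dependent threshold by the $o(n)$-estimate, we have $\psi_{\iii|_n}^s(\hhh)\le\theta^n\mu([\iii|_n])$ eventually; let $A_N$ be the Borel set of $\iii$ for which this holds for all $n\ge N$, so $\mu(A_N)\uparrow1$. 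Given a cover $A_N\subset\bigcup_j[\iii_j]$ with $|\iii_j|\ge n\ge N$, retain only the cylinders that are maximal in the cover and meet $A_N$; these are pairwise disjoint, hence $\sum_j\mu([\iii_j])\le1$, and picking $\hhh_j\in[\iii_j]\cap A_N$ gives $\psi_{\iii_j}^s(\hhh)=\psi_{\hhh_j|_{m_j}}^s(\hhh)\le\theta^{m_j}\mu([\iii_j])\le\theta^n\mu([\iii_j])$ with $m_j=|\iii_j|$. Therefore $\sum_j\psi_{\iii_j}^s(\hhh)\le\theta^n$, so $\GG_n^s(A_N)\le\theta^n$ for every $n\ge N$; since $\GG_n^s$ is non-decreasing in $n$ while $\theta^n\to0$, this forces $\GG^s(A_N)=0$. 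Letting $N\to\infty$, the full-measure set $C_s=\bigcup_N A_N$ satisfies $\GG^s(C_s)=0$, so $\dime(\mu)\le\dime(C_s)\le s$; as $s>t$ was arbitrary, $\dime(\mu)\le t$, and together with the lower bound, $\dime(\mu)=t$.

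The step I expect to be the genuine obstacle is not the mass distribution bookkeeping but the passage from the \emph{asymptotic} comparison $\log\mu([\iii|_n])-\log\psi_{\iii|_n}^t(\hhh)=o(n)$ --- which is everything ergodicity plus the subadditive ergodic theorem deliver --- to the \emph{exact} value $t$ of the equilibrium dimension. This is the point where one must perturb $t$ to $t\pm\delta$, absorb the uncontrolled $o(n)$ discrepancy into the genuinely exponential factor $\ys^{\delta n}$ furnished by condition~(3), and only then let $\delta\to0$. Checking that $\mu(B_N)\uparrow1$ and $\mu(A_N)\uparrow1$, and that an arbitrary cylinder cover may be thinned to a disjoint (or maximal) one meeting the relevant set without increasing the corresponding $\psi$-sum, is routine.
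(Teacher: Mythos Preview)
Your proof is correct and follows the route the paper itself indicates: the paper does not give a full proof here but, immediately before the statement, derives the local-dimension relation \eqref{eq:localdim} from Shannon--McMillan and Kingman's subadditive ergodic theorem together with $h_\mu+E_\mu(t)=P(t)=0$, and then remarks that the theorem ``is easier to believe.'' Your argument is precisely the intended completion: you translate \eqref{eq:localdim} into the $o(n)$ comparison $\log\mu([\iii|_n])-\log\psi_{\iii|_n}^t(\hhh)=o(n)$, perturb $t$ to $t\pm\delta$ via condition~(3) to absorb the subexponential error into the geometric factor $\ys^{\delta n}$, and run the two Frostman mass-distribution bounds. The bookkeeping (the sets $B_N$, $A_N$, the thinning of covers to maximal disjoint cylinders meeting the relevant set) is carried out correctly; in particular, the upper-bound cover argument only requires exhibiting \emph{one} good cover, so taking all level-$n$ cylinders would already suffice, though your ``arbitrary cover'' phrasing is harmless since the thinned family still covers $A_N$.
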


\section{Application to self-affine sets}

Suppose $X \subset \R^d$ is compact. Let $\{ \fii_\iii : i \in I^* \}$
be a collection of affine mappings of the form
$\fii_i(x) = A_ix+a_i$ for every $i \in I$, where $A_i$ is a contractive
non-singular linear mapping and $a_i \in \R^d$. We also assume that
\begin{itemize}
  \item[(1)] $\fii_{\iii,i}(X) \subset \fii_\iii(X)$ for every $\iii
             \in I^*$ and $i \in I$,
  \item[(2)] $\text{diam}\bigl( \fii_\iii(X) \bigr) \to 0$,
             as $|\iii| \to \infty$.
\end{itemize}
By \emph{contractivity} we mean that for every $i
\in I$ there exists a constant $0< s_i <1$ such that
$|\fii_i(x)-\fii_i(y)| \le
s_i |x-y|$ whenever $x,y \in \R^d$. This collection is called
an \emph{affine iterated function system} and the
corresponding limit set a \emph{self-affine set}.

Clearly, the products $A_\iii =
A_{i_1}\cdots A_{i_{|\iii|}}$ are also contractive and
non-singular. Singular values of a non-singular matrix are the
lengths of the principle semiaxes of the image of the unit ball. On
the other hand, the singular values $1 > \alpha_1 \ge \alpha_2 \ge
\cdots \ge \alpha_d > 0$ of a contractive, non-singular matrix $A$
are the non-negative square roots of the eigenvalues of $A^*A$,
where $A^*$ is the transpose of $A$. Define a singular value function
$\alpha^t$ by setting $\alpha^t(A) =
\alpha_1\alpha_2\cdots\alpha_{l-1}\alpha_l^{t-l+1}$, where $l$ is
the smallest integer greater than $t$ or equal to it. For all $t>d$ we put
$\alpha^t(A) = (\alpha_1\cdots\alpha_d)^{t/d}$. It is clear that
$\alpha^t(A)$ is continuous and strictly decreasing in $t$. If for each
$\iii \in I^*$ we choose $\psi_\iii^t \equiv \alpha^t(A_\iii)$, then
$\psi_\iii^t$ is a constant cylinder function. The subchain rule for
$\psi_\iii^t$ is satisfied by Falconer \cite[Lemma 2.1]{fa1}.
We call this choice of a cylinder function in this setting a
\emph{natural cylinder function}.

Our aim is to study the Hausdorff dimension of
measures on self-affine sets. We
say that the Hausdorff dimension of a given Borel probability measure
$m$ is $\dimh(m) = \inf\{ \dimh(A) : A \text{ is a Borel set such that
} m(A)=1 \}$. Checking whether $\dimh(m) = \dimh(E)$ is one way to examine
how well a given measure $m$ is spread out on a given set $E$.
The desired result follows from Theorem \ref{thm:fulleqdim}
by applying Falconer's result for the Hausdorff dimension of
self-affine sets; see \cite{fa1}.

\begin{theorem}[\mbox{\cite[Theorem 4.5]{k}}]
  Suppose an affine IFS is equipped with the natural cylinder function
  and the mappings are of the form $\fii_i(x) = A_ix + a_i$, where
  $|A_i| < \tfrac{1}{2}$. We also assume that $P(t)=0$ and $m$ is a
  projected $t$-equilibrium measure. Then for $\HH^{d\#I}$-almost
  all $a=(a_1,\ldots,a_{\#I}) \in \R^{d\#I}$ we have
  \begin{equation*}
    \dimh(m) = \dimh(E),
  \end{equation*}
  where $E = E(a)$.
\end{theorem}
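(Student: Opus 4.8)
The plan is to deduce this from Theorem~\ref{thm:fulleqdim} via Falconer's potential-theoretic method, the new ingredient being the equilibrium measure. Since the natural cylinder function $\psi_\iii^t=\alpha^t(A_\iii)$ depends only on the linear parts $A_i$, the $t$-equilibrium measure $\mu$ on $I^\infty$ underlying $m$ is independent of the translation vector $a$; writing $m=m_a=\pi_{a*}\mu$, where $\pi_a$ is the projection associated with the affine IFS with translations $a$, what must be shown is $\dimh(m_a)=\dimh(E(a))$ for $\HH^{d\#I}$-a.e.\ $a$. Since $m_a$ is a probability measure carried by $E(a)$, one always has $\dimh(m_a)\le\dimh(E(a))$, and by Falconer's theorem \cite{fa1} (whose hypothesis $|A_i|<\tfrac12$ is in force, and for which $P(t)=0$ identifies $t$ as the affinity dimension) $\dimh(E(a))=\min\{d,t\}$ for $\HH^{d\#I}$-a.e.\ $a$. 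So the statement reduces to the lower bound $\dimh(m_a)\ge\min\{d,t\}$ for a.e.\ $a$. We may take $\mu$ ergodic, so Theorem~\ref{thm:fulleqdim} gives $\dime(\mu)=t$ and \eqref{eq:localdim} gives $\log\mu([\iii|_n])/\log\alpha^t(A_{\iii|_n})\to1$ for $\mu$-a.e.\ $\iii$.

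For the lower bound I would fix $0<s<\min\{d,t\}$ and a large ball $Q\subset\R^{d\#I}$; by the potential-theoretic (energy) lower bound for Hausdorff dimension it suffices to find a Borel set $F\subset I^\infty$ with $\mu(F)>0$ such that the $s$-energy $\iint|x-y|^{-s}\,d\nu_a(x)\,d\nu_a(y)$ of $\nu_a:=\pi_{a*}(\mu|_F)$ is finite for a.e.\ $a\in Q$: then $\dimh(m_a)\ge\dimh(\nu_a)\ge s$ for a.e.\ $a\in Q$ (the first inequality because $\nu_a\le m_a$), and letting $Q$ exhaust $\R^{d\#I}$ and $s\uparrow\min\{d,t\}$ along countable sequences yields the bound. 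Rewriting the $s$-energy on the symbol space, applying Fubini, and using Falconer's transversality estimate \cite{fa1} --- the point where $|A_i|<\tfrac12$ enters, which gives a constant $c$, depending only on $Q$ and the $A_i$, with $\int_Q|\pi_a(\iii)-\pi_a(\jjj)|^{-s}\,da\le c\,\alpha^s(A_{\iii\wedge\jjj})^{-1}$ for all distinct $\iii,\jjj\in I^\infty$, where $\iii\wedge\jjj$ denotes their longest common initial word --- one obtains
\begin{equation*}
  \int_Q\iint|x-y|^{-s}\,d\nu_a(x)\,d\nu_a(y)\,da\;\le\;c\iint_{F\times F}\frac{d\mu(\iii)\,d\mu(\jjj)}{\alpha^s(A_{\iii\wedge\jjj})}.
\end{equation*}

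It remains to choose $F$ so that the right-hand side is finite, which is the crux of the argument. Given a small $\eps>0$, use \eqref{eq:localdim} and Egorov's theorem to get $F$ with $\mu(F)>1-\eps$ and $N\in\N$ such that $\mu([\iii|_n])\le\alpha^t(A_{\iii|_n})^{1-\eps}$ whenever $\iii\in F$ and $n\ge N$. Split $\iint_{F\times F}$ according to the length $k$ of $\iii\wedge\jjj$: the terms with $k<N$ give a finite sum, since there are finitely many words and $\alpha^s>0$ everywhere; and for $k\ge N$, every word $\mathtt{u}\in I^k$ with $\mu([\mathtt{u}]\cap F)>0$ satisfies $\mu([\mathtt{u}])\le\alpha^t(A_{\mathtt{u}})^{1-\eps}$ (pick $\iii\in[\mathtt{u}]\cap F$), while words with $\mu([\mathtt{u}]\cap F)=0$ contribute nothing, so its contribution is at most $\alpha^s(A_{\mathtt{u}})^{-1}\alpha^t(A_{\mathtt{u}})^{1-\eps}\mu([\mathtt{u}])$. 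Now condition~(3) in the definition of a cylinder function yields $\alpha^s(A_{\mathtt{u}})^{-1}\le\ys^{(t-s)k}\alpha^t(A_{\mathtt{u}})^{-1}$, while the crude bound $\alpha^t(A_{\mathtt{u}})\ge\rho^{tk}$, with $\rho=\min_i\alpha_d(A_i)$ the least singular value among the generators, gives $\alpha^t(A_{\mathtt{u}})^{-\eps}\le\rho^{-\eps tk}$; summing over $\mathtt{u}\in I^k$ and using $\sum_{\mathtt{u}\in I^k}\mu([\mathtt{u}])=1$ bounds the level-$k$ contribution by $(\ys^{t-s}\rho^{-\eps t})^k$. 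Since $0<\ys<1$ and $t>s$, this base is $<1$ once $\eps$ is small enough relative to $t-s$, so summing the geometric series over $k$ gives a finite total, and by Fubini the $s$-energy of $\nu_a$ is finite for a.e.\ $a\in Q$, as required.

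In short, the transversality estimate and the dimension formula for $E(a)$ are quoted from \cite{fa1}, and the only new work is the passage from $\dime(\mu)=t$ --- i.e.\ from \eqref{eq:localdim} --- to the finiteness of $\iint\alpha^s(A_{\iii\wedge\jjj})^{-1}\,d\mu\,d\mu$ on a set of almost full $\mu$-measure. I expect this to be the main obstacle: \eqref{eq:localdim} is only an almost-sure asymptotic statement, so turning it into a bound summable over all cylinders necessarily costs an $\eps$ (hence the strict inequality $s<\min\{d,t\}$ and the final limiting argument), and that loss must be reabsorbed using condition~(3) of the cylinder function together with a brutal lower bound on the singular value function.
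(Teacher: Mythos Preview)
Your proof is correct and follows the same overall strategy as the paper --- combine Theorem~\ref{thm:fulleqdim} with Falconer's work in \cite{fa1} --- but you carry out explicitly what the paper compresses into a single appeal to ``Falconer's theorem''. The paper's three-line argument asserts $\dimh(A)=\dime(\pi^{-1}(A))$ for every full-$m$-measure $A$ and $\dime(I^\infty)=\dimh(E)$, both ``using Falconer's theorem''; the second equality is indeed Falconer's headline result, but the first, read literally, would require the exceptional null set of translation vectors to be uniform over the uncountable family of full-measure sets $A$. Your route avoids this by working directly with the fixed measure $\mu$: you feed $\mu$ (restricted via Egorov so that \eqref{eq:localdim} becomes a uniform bound) into Falconer's transversality lemma and deduce finiteness of the $s$-energy of $\pi_{a*}(\mu|_F)$ for a.e.\ $a$, which gives $\dimh(m_a)\ge s$ for \emph{all} full-$m_a$-measure sets at once. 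This is the right way to make the step rigorous, and the need for Egorov reflects a genuine point: for a merely subadditive potential the equilibrium measure need not satisfy the Gibbs-type bound $\mu([\iii])\le c\,\alpha^t(A_\iii)$, so one cannot simply plug $\mu$ into Falconer's proposition without the $\eps$-loss you then reabsorb via condition~(3). In short, the two proofs are the same in spirit; yours supplies the analytic details that the paper delegates to \cite{k}.
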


\begin{proof}
  According to Theorem \ref{thm:fulleqdim} we have $\dime(A) =
  \dime(I^\infty)$ whenever $A \subset I^\infty$ has full $\mu$-measure.
  Hence for any $A \subset E$ with full $m$-measure we get
  \begin{equation*}
    \dimh(A) = \dime\bigl( \pi^{-1}(A) \bigr) = \dime(I^\infty) = \dimh(E)
  \end{equation*}
  using Falconer's theorem.
\end{proof}

This theorem gives also a
partially positive answer to the open question proposed by Kenyon and
Peres in \cite{kp}. Partially positive in a sense that we are able to extend
the class of those sets for which we can give a positive answer
in their question. They asked whether there exists a $T$-invariant
ergodic probability measure on a given compact set, where the mapping
$T$ is continuous and expanding, such that it has full dimension. In
our case the mapping $T$ on the self-affine set is constructed by using
inverses of the mappings of IFS.


\end{document}